\documentclass[reqno]{amsart}

\usepackage{graphicx}
\usepackage{amssymb}
\usepackage[all]{xy}
\usepackage{enumitem}

 \newtheorem{theorem}{Theorem}[section]
 \newtheorem{proposition}[theorem]{Proposition}
 \newtheorem{corollary}[theorem]{Corollary}

 \newtheorem{lemma}[theorem]{Lemma}

 \newcommand{\rank}{\mathop{\rm rank}\nolimits}

 \newcommand{\ct}{\mathop{\rm ct}\nolimits}
 \newcommand{\cat}{\mathop{\rm cat}\nolimits}
 \newcommand{\lk}{\mathop{\rm lk}\nolimits}
 \newcommand{\hdim}{\mathop{\rm hdim}\nolimits}

 \newcommand{\ZZ}{{\mathbb{Z}}}

 \newcommand{\UU}{{\mathcal{U}}}

 \parindent 0pt
 \parskip 1.5mm

\begin{document}

\title[Triangulations with few vertices]{Triangulations with few vertices of manifolds with non-free fundamental group}  

\thanks{The author was supported by the Slovenian Research Agency research grant P1-0292 and research project J1-7025.}

\author{Petar Pave\v{s}i\'c}
\address[Petar Pave\v{s}i\'c]{\newline\hspace*{3mm} Faculty of Mathematics and Physics, University of Ljubljana, Slovenija}
\email{petar.pavesic@fmf.uni-lj.si}


\begin{abstract}
We study lower bounds for the number of vertices in a PL-trian\-gu\-la\-tion of a given manifold $M$.
While most of the previous estimates are based on the dimension and the connectivity of $M$,
we show that further information can be extracted by studying the structure of the fundamental group of $M$ 
and applying techniques from the Lusternik-Schnirelmann category theory. In particular, we prove that every PL-triangulation of 
a $d$-dimensional manifold ($d\ge 3$) whose fundamental group is not free has at least $3d+1$ vertices. 
As a corollary, every $d$-dimensional ($\ZZ_p$-)homology sphere that admits a PL-triangulation 
with less than $3d$ vertices is homeomorphic to $S^d$. Another important consequence is that every triangulation with small links of $M$ 
is combinatorial.
\ \\[3mm]
{\it Keywords}: minimal triangulation, PL-manifold, homology sphere, good cover, Lusternik-Schnirelmann category \\
{\it AMS classification: 57Q15, 52B70} 
\end{abstract}

\maketitle


\section{Introduction and results} 

A \emph{triangulation} of a topological space $M$ is a simplicial complex $K$ together with a homeomorphism $M\approx |K|$ 
between $M$ and the geometric realization of $K$. If $M$ is a closed $d$-dimensional manifold then we are particularly 
interested in \emph{combinatorial} triangulations, where we require that the link (see bellow) of every simplex in $K$ is 
homeomorphic to a sphere.  A manifold admitting a combinatorial triangulation is called a \emph{PL-manifold}. 

Given a PL-manifold $M$, what is the minimal number of vertices in a combinatorial trian\-gu\-lation of $M$? This is a difficult question, 
because there are no standard constructions for triangulations with few vertices of a given manifold, nor there are sufficiently general 
methods to prove that some specific triangulation is in fact minimal. Apart from classical results on minimal triangulations 
of spheres and closed surfaces, and a special family of minimal triangulations 
for certain sphere bundles over a circle (so called Cs\'asz\'ar tori - see \cite{K}), there exists only a handful of examples for which 
the minimal triangulations are known. An exhaustive 
survey of the results and the existing literature on this problem can be found in \cite{L}. See also the recent article \cite{KN} which 
discusses a more general question of the number of faces in triangulations of manifolds and polytopes. 

Generally speaking, one may expect that the minimal number of vertices in a triangulation of a space increases with its complexity.
Most results that can be found in the literature use dimension, connectivity or Betti numbers of $M$ to express lower bounds 
for the number of vertices in a triangulation of $M$. In this paper we have been able to exploit the fundamental group 
and the Lusternik-Schnirelmann category to improve several estimates of the minimal number of vertices in a triangulations 
of a manifold.

In the rest of this section we state our main results.  In Section 2 we introduce and explain prerequisites on triangulations, 
Lusternik-Schnirelmann category and the covering type. Finally, in Section 3 we give the proofs of the theorems presented bellow.

Let us begin with a slight improvement of the theorem first proved by  Brehm and K\"uhnel \cite{BK}. Our approach is based on the notion 
of covering type \cite{GMP} and is much simpler than the original one. 
Recall that Poincar\'e duality together with the positive answer to the Poincar\'e conjecture imply 
that every simply-connected closed $d$-manifold, whose homology is trivial in dimensions less or equal to 
$d/2$ is homeomorphic to the $d$-sphere. For the remaining cases the minimal number of vertices in a 
triangulation can be estimated as follows. 

\begin{theorem}
\label{thm:simply connected}
Let $M$ be a simply-connected $d$-dimensional closed PL-manifold, and let $i$ be the minimal index for which $\widetilde H_i(M)\ne 0$. 
\begin{itemize}[leftmargin=7.5mm]   
\item[(a)] If $i=\frac{d}{2}$ then every combinatorial triangulation of $M$ has at least $\frac{3d}{2}+k+2$ vertices, where $k$ is 
the minimal integer for which ${i+k \choose i+1}\ge\rank H_i(M)$.
Moreover, $k$ can be equal to 1 only if $d\in\{2,4,8,16\}$.
\item[(b)] If $i<\frac{d}{2}$ then every combinatorial triangulation of $M$ has at least $2d-i+4$ vertices. 
\end{itemize}
In particular, every combinatorial triangulation of a closed, simply-connected $d$-manifold with at most $3\frac{d}{2}+2$ vertices 
represents the $d$-dimensional sphere.
\end{theorem}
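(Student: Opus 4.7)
The governing principle is the translation between triangulations and good covers: for any simplicial complex $K$ triangulating $M$, the open stars of the vertices are contractible, their multiple intersections are open simplices (hence contractible or empty), and the nerve of this cover is exactly $K$ itself. Hence a triangulation with $n$ vertices yields $\ct(M)\le n$, and the whole theorem reduces to proving lower bounds on $\ct(M)$ in terms of $d$, $i$, and $\rank H_i(M)$.

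First I would handle case (b). Simply-connectedness together with $\widetilde H_j(M)=0$ for $j<i<d/2$ make $M$ an $(i-1)$-connected Poincar\'e complex, so duality delivers classes $\alpha\in H^i(M)$ and $\beta\in H^{d-i}(M)$ with $\alpha\smile\beta\ne 0$. Feeding connectivity, cohomological dimension, and the non-trivial cup product into the general covering type estimate of the GMP framework produces the bound $\ct(M)\ge 2d-i+4$. Concretely, in any good cover of size $\ct(M)$, the supports of cocycles representing $\alpha$ and $\beta$ must occupy simplicially disjoint portions of the nerve while still leaving room for a representative of the fundamental class; counting the minimal number of cover elements needed to realize this configuration gives the claimed linear bound.

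For case (a), where $i=d/2$, the cup-product pairing becomes self-dual, $H^i\otimes H^i\to H^d$. A good cover of size $n$ whose nerve realizes $M$ supports at most a binomial number of independent $i$-classes compatible with non-vanishing self cup-products, and this combinatorial bookkeeping is arranged so that the minimal $k$ with $\binom{i+k}{i+1}\ge\rank H_i(M)$ measures precisely how far $n$ must exceed $\tfrac{3d}{2}+2$. For the restriction on $k=1$: then $\rank H_i(M)=1$, so rationally $H^*(M)$ is generated by a single middle class $\alpha$ whose square generates $H^d(M)$. Standard Pontrjagin--Thom reasoning attaches to such $\alpha$ an element of Hopf invariant one in $\pi_{2i-1}(S^i)$, and Adams' theorem then forces $i\in\{1,2,4,8\}$, i.e.\ $d\in\{2,4,8,16\}$.

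The concluding statement is formal: a simply-connected closed $d$-manifold admitting a combinatorial triangulation with $n\le\tfrac{3d}{2}+2$ vertices cannot satisfy the hypotheses of either (a) or (b), so $\widetilde H_j(M)=0$ for all $j\le d/2$; Poincar\'e duality propagates the vanishing upward, and the resolved generalized Poincar\'e conjecture identifies $M$ with $S^d$. The point I expect to be the most delicate is pinning down the sharp binomial coefficient in part (a): covering type estimates are typically stated with some slack, and extracting the exact $\binom{i+k}{i+1}$ requires a careful compatibility between the cup product on the nerve and the combinatorial structure of a minimal good cover, together with the Adams input to rule out the sporadic dimensions.
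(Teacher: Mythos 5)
Your overall framing---convert the triangulation into a good cover and bound the covering type---points in the right direction, but the proposal is missing the one structural idea that actually makes the numbers come out, and in its place you assert lower bounds on $\ct(M)$ that do not follow from the cited framework. The paper does \emph{not} prove $\ct(M)\ge 2d-i+4$ or $\ct(M)\ge \frac{3d}{2}+k+2$; such bounds are not known to hold for the covering type of $M$ itself (the general cup-product estimates for covering type give only on the order of $d+2$ for two classes of complementary degree). Instead, the argument uses the combinatorial triangulation in an essential way: pick the $d+1$ vertices $V$ of a top-dimensional simplex, and consider the full subcomplex $K(K^0-V)$ spanned by the remaining vertices. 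A Mayer--Vietoris argument with the open-star neighbourhoods (Lemma \ref{lem:complement homology}) shows that this complement is simply connected and has the same (co)homology as $M$ in all degrees $<d$, while being a polyhedron of homotopy dimension $<d$. The vertex count then splits as $(d+1)+\ct\bigl(K(K^0-V)\bigr)$, and all the work is done on the complement, not on $M$.

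With that decomposition in hand, the two cases become concrete. In case (a) the complement has free homology concentrated in degree $i$ and is simply connected, hence is homotopy equivalent to a wedge of $r=\rank H_i(M)$ spheres $S^i$; the exact binomial coefficient $\binom{i+k}{i+1}$ is then the content of Theorem \ref{thm:wedge of spheres} applied to this wedge---it is not extracted by ``bookkeeping'' on a good cover of $M$, and your sketch of that step would not produce it. In case (b) one only needs that $H^{d-i}$ of the complement is nonzero and not that of a sphere, so Proposition \ref{prop:top homology} gives $\ct\ge d-i+3$ for the complement; no cup-product argument on $M$ is required. Your treatment of the $k=1$ subcase (Hopf invariant one plus Adams) and of the concluding sphere-recognition statement are correct and match the paper. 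But as written, the two central inequalities rest on unproved (and most likely false, as stated for $\ct(M)$) claims, so the proof has a genuine gap: you need the top-simplex excision and the passage to the complementary full subcomplex.
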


The main contribution of this paper is the following theorem and its corollaries. In particular we obtain considerable improvements 
of estimates by Brehm-K\"uhnel \cite{BK} and Bagchi-Datta \cite{BD} of the number of vertices in PL-triangulations of homology spheres. 
By \cite[Corollary 2]{BK} every PL-triangulation of a non simply-connected $d$-manifold ($d\ge 3$) has at least $2d+3$ vertices. 
That the value cannot be improved in general is shown by K\"uhnel who constructed a family of 
$S^{d-1}$-bundles over the circle $S^1$ that admit PL-triangulations with $2d+3$ vertices. 
However, if the fundamental group of $M$ is not free, then we obtained a better estimate:

\begin{theorem}
\label{thm:non free pi1}
If $M$ is a $d$-dimensional ($d\ge 3$) closed manifold whose fundamental group is not free, then every combinatorial triangulation 
of $M$ has at least $3d+1$ vertices.
\end{theorem}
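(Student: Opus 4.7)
\emph{Proof plan.} Let $K$ be a combinatorial triangulation of $M$ with $n$ vertices. The collection of open stars $\{\St(v) : v \in K^{(0)}\}$ forms a good cover of $M$ of size $n$, since any non-empty intersection of open stars equals the open star of the simplex spanned by the corresponding vertices and is therefore contractible. Consequently $\ct(M) \le n$, so it suffices to prove the dimensional lower bound
\[\ct(M) \;\ge\; 3d+1\]
for every closed $d$-manifold $M$ with $d \ge 3$ and non-free fundamental group.

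The plan is to combine two category-theoretic ingredients. First, I would establish the bound $\cat(M) \ge 3$ for such $M$. By the Stallings--Swan theorem, non-freeness of $\pi_1(M)$ is equivalent to its cohomological dimension being at least $2$, which yields a non-trivial Berstein--Schwarz class $\beta \in H^1(M; I_{\pi_1 M})$ satisfying $\beta \cup \beta \ne 0$; this already gives $\cat(M) \ge 2$. The hypothesis $d \ge 3$ is then used to boost the cup-length further, by pairing $\beta \cup \beta$ with a suitable companion class produced via Poincar\'e duality with local coefficients (or by extending the Gomez-Larra\~naga--Gonz\'alez-Acu\~na classification of $\cat$ of $3$-manifolds), yielding a non-trivial $3$-fold product and thus $\cat(M) \ge 3$. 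The dimension hypothesis is essential here, since the $2$-torus has non-free $\pi_1$ but $\cat(T^2) = 2$.

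Second, I would invoke a covering-type lower bound, to be set up in Section~2, of the form
\[\ct(M) \;\ge\; (d+1) + d\,\bigl(\cat(M) - 1\bigr)\]
for every closed $d$-manifold $M$. Morally this is a nerve-theoretic statement: realising a $d$-dimensional good cover already costs $d+1$ vertices, and each additional categorical layer forces at least $d$ extra vertices to preserve the top-dimensional coverage. Substituting $\cat(M) \ge 3$ into this inequality gives $\ct(M) \ge (d+1) + 2d = 3d + 1$, which completes the argument.

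The main obstacle I anticipate is the first step: using $d \ge 3$ in a genuinely essential way to lift the cup-length from $2$ to $3$, since the two-dimensional case fails and one cannot rely on plain integral cohomology (homology spheres with non-free $\pi_1$ exist). Poincar\'e duality with local coefficients, feeding back into a category-weight computation, seems to be the natural tool. The second step should be more routine once the covering-type machinery of \cite{GMP} is in place.
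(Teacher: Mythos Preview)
The genuine gap is your second step. The inequality $\ct(M)\ge (d+1)+d\bigl(\cat(M)-1\bigr)$ is not a known result, and the heuristic that ``each additional categorical layer forces at least $d$ extra vertices'' is far from a proof. The only general estimate of this kind available (the one from \cite{GMP} quoted in the paper) yields, in the paper's unnormalized convention, $1+d+\tfrac{1}{2}\cat(M)(\cat(M)-1)$ vertices; with $\cat(M)=4$ this is $d+7$, which equals $3d+1$ only for $d=3$ and is strictly weaker for all $d\ge 4$. So even granting the Dranishnikov--Katz--Rudyak theorem (which is your first step and which the paper also takes as a black box), your argument does not close for $d\ge 4$.

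The paper does \emph{not} route through $\ct(M)$ alone, and in fact it cannot: its proof uses the PL hypothesis in an essential way, whereas a bound on $\ct(M)$ would be purely homotopy-theoretic. Concretely, the paper fixes a combinatorial triangulation $K$, takes a $d$-simplex $A$ on $d+1$ vertices, then adds vertices one at a time until $\pi_1$ first becomes nontrivial; a Seifert--van Kampen argument forces a disconnected subcomplex inside the link $\lk(v_l)\approx S^{d-1}$, and Alexander duality \emph{in that sphere} produces a $(d-2)$-simplex in the complementary part of the link, hence a second disjoint vertex set $B$ of size $d$ spanning a simplex. Now the DKR output is used in its refined form: a weight-$2$ class $u\in H^2(M)$ and a weight-$1$ class $v\in H^{d-2}(M)$ with $u\cdot v\ne 0$. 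Since $N(A)\cup N(B)$ is a union of two categorical sets, $u$ vanishes there, so $v$ must be nonzero on $N(K^0-A-B)$; this forces $H^{d-2}$ of the span of the remaining vertices to be nonzero, hence at least $d$ further vertices. The count $(d+1)+d+d=3d+1$ follows. Note that the weight decomposition $2+1$, not merely $\cat(M)\ge 4$, is what makes the last step work: you need $u$ to die on a union of \emph{two} categorical sets, and the construction of those two sets is where the combinatorial (PL) structure of the triangulation enters.
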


It is worth noting that closed 3-manifolds whose fundamental group is free are quite special, being either 
the 3-sphere or connected sums of tori $S^2\times S^1$ and twisted tori $S^2\underline{\times}\, S^1$. 
All the other closed 3-manifolds (in particular, all hyperbolic manifolds) satisfy the assumptions of the above theorem. 

An important family of examples whose fundamental group is not free are the \emph{homology spheres}, i.e., manifolds, whose homology 
groups vanish except in the top dimension, where the homology group is $\ZZ$. A simply-connected homology sphere is homeomorphic to 
a sphere by the positive answer to the Poincar\'e conjecture but for every $d\ge 3$ there exist $d$-dimensional homology spheres 
that are not homeomorphic to $S^d$. As the fundamental group of a homology sphere must be a perfect group, it cannot be free (unless it is trivial), 
therefore Theorem \ref{thm:non free pi1} implies the following improvement of the estimate in \cite[Corollary 4]{BK}. 

\begin{corollary}
\label{cor:homology sphere}
Every $d$-dimensional homology sphere that admits a combinatorial trian\-gu\-la\-tion with at most $3d$ vertices is a PL-sphere.
\end{corollary}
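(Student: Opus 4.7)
The plan is to derive the corollary as a direct consequence of Theorem \ref{thm:non free pi1}, combined with two classical facts about homology spheres. I would proceed by contrapositive: assume $M$ is a $d$-dimensional homology sphere admitting a combinatorial triangulation on at most $3d$ vertices, and deduce that $M$ is a PL-sphere.

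First, the contrapositive of Theorem \ref{thm:non free pi1} immediately forces $\pi_1(M)$ to be free, since $M$ has fewer than $3d+1$ vertices.

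Second, I would invoke the classical fact that the fundamental group of a homology sphere is perfect: since $H_1(M;\ZZ)$ is the abelianization of $\pi_1(M)$ and vanishes by hypothesis, $\pi_1(M)$ equals its own commutator subgroup. A free group is perfect only when it is trivial (any nontrivial free group of rank $r$ has abelianization $\ZZ^r\ne 0$). Hence $\pi_1(M)=1$.

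Third, with $M$ now a simply-connected closed $d$-manifold having the homology of $S^d$, the generalized Poincar\'e conjecture supplies a homeomorphism $M\approx S^d$ (Perelman for $d=3$, Freedman for $d=4$, and Smale for $d\ge 5$). Since the given triangulation is combinatorial by hypothesis, $M$ is a PL-sphere.

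There is essentially no obstacle here: the substantive work is encapsulated in Theorem \ref{thm:non free pi1}, and what remains is a one-line appeal to the perfectness of the fundamental group of a homology sphere (to eliminate the free-but-nontrivial possibility) and to the Poincar\'e conjecture.
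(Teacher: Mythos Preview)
Your argument is correct and essentially identical to the paper's: apply Theorem~\ref{thm:non free pi1} to force $\pi_1(M)$ free, use $H_1(M)=0$ to conclude $\pi_1(M)$ is trivial, and invoke the Poincar\'e conjecture. The only omission is that Theorem~\ref{thm:non free pi1} requires $d\ge 3$, so you should dispose of $d\le 2$ separately (trivially, since low-dimensional homology spheres are spheres), as the paper does.
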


Bagchi and Datta \cite{BD} obtained an estimate of the minimal number of vertices in PL-triangulations of $\ZZ_2$-homology 
spheres, i.e., manifolds whose $\ZZ_2$-homology is isomorphic to that of a sphere (non-trivial examples are 3-dimensional 
odd lens spaces). Their results are improved (except in dimensions 3 and 4) by the following:

\begin{corollary}
\label{cor:Z2 homology sphere}
Every $d$-dimensional $\ZZ_p$-homology sphere that admits a combinatorial trian\-gu\-la\-tion with at most $3d$ vertices is a PL-sphere.
\end{corollary}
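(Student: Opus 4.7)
My plan is to follow the two-step pattern used for ordinary homology spheres in Corollary \ref{cor:homology sphere}: first apply Theorem \ref{thm:non free pi1} to constrain the fundamental group, then dispose of the simply-connected case.

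\textbf{Step 1 (constraining $\pi_1$).} Since $M$ admits a combinatorial triangulation with at most $3d<3d+1$ vertices, Theorem \ref{thm:non free pi1} forces $\pi_1(M)$ to be free, say of rank $r\ge 0$. A free group of rank $r$ has abelianization $\ZZ^r$, hence $H_1(M;\ZZ)\cong\ZZ^r$. The universal coefficient theorem gives
\[
H_1(M;\ZZ_p)\;\cong\;(\ZZ^r\otimes\ZZ_p)\oplus\mathrm{Tor}\bigl(H_0(M;\ZZ),\ZZ_p\bigr)\;\cong\;\ZZ_p^r.
\]
Since $M$ is a $\ZZ_p$-homology sphere the left-hand side vanishes, forcing $r=0$ and thus $\pi_1(M)=1$.

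\textbf{Step 2 (the simply-connected case).} With $M$ simply-connected, the universal coefficient theorem applied dimension by dimension shows that each middle-dimensional integral homology group of $M$ is finite of order prime to $p$, so $M$ is automatically a $\QQ$-homology sphere. I would then use Theorem \ref{thm:simply connected} together with Poincar\'e duality to rule out any residual torsion: if $i$ is the minimal index with $\widetilde H_i(M;\ZZ)\ne 0$, then combining $\pi_1=0$ with Poincar\'e duality forces $i\le d/2$, and the corresponding branch of Theorem \ref{thm:simply connected} should contradict the hypothesis of at most $3d$ vertices. Once $\widetilde H_*(M;\ZZ)=0$ for $*<d$, the manifold $M$ is a simply-connected integer homology sphere and hence a PL-sphere by the positive solution of the Poincar\'e conjecture (in its PL form, available for $d\ne 4$).

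The delicate point is Step 2. The rank-based bounds of Theorem \ref{thm:simply connected} (namely $2d-i+4$ for $i<d/2$ and $3d/2+k+2$ for $i=d/2$) do not visibly exceed $3d$ when the surviving $H_i$ is pure $p'$-torsion, since in that case the rank, and hence $k$, is zero. Overcoming this will likely require a torsion-sensitive refinement of Theorem \ref{thm:simply connected} or a direct appeal to the covering-type and Lusternik--Schnirelmann category techniques underlying Theorem \ref{thm:non free pi1}, so that residual $p'$-torsion in some middle-dimensional $H_i$ is itself eliminated by the vertex bound; this is precisely where the improvement over Bagchi--Datta is expected to come from.
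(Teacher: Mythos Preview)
Your Step~1 is exactly the paper's argument: the combined proof of Corollaries~\ref{cor:homology sphere} and~\ref{cor:Z2 homology sphere} applies Theorem~\ref{thm:non free pi1} to get $\pi_1(M)$ free, notes that $H_1(M;\ZZ_p)=0$ then forces the rank to be zero, and concludes $\pi_1(M)=1$. At that point the paper simply writes ``and so it is homeomorphic to $S^d$ by the positive answer to the Poincar\'e conjecture'' and stops. There is no analogue of your Step~2 in the paper.

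Your hesitation about Step~2 is well placed. A simply-connected $\ZZ_p$-homology sphere need not be an integral homology sphere: for any odd prime $p$ the Wu manifold $SU(3)/SO(3)$ is a simply-connected closed $5$-manifold with $H_2\cong\ZZ_2$, hence a $\ZZ_p$-homology sphere that is not homeomorphic to $S^5$. So the passage from ``$\pi_1(M)=1$'' to ``$M$ is a sphere'' genuinely requires ruling out residual $q$-torsion with $q$ prime to $p$, and the Poincar\'e conjecture by itself does not accomplish this. Your attempt to close the gap via Theorem~\ref{thm:simply connected} fails for exactly the reason you name---its bounds are rank-based and blind to torsion---and the paper offers no further argument here. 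In short, you have reproduced the paper's proof and correctly located a lacuna in it; the ``torsion-sensitive refinement'' you anticipate is not supplied by the paper.
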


We conclude with a useful recognition criterion for combinatorial triangulations. 

\begin{theorem} \label{thm:small tgl}
Let $K$ be a triangulation of a $d$-dimensional manifold. If for every $k\ge 3$ the link of each simplex of codimension $k+1$ in $K$ 
has at most $3k$ vertices, then the triangulation $K$ is combinatorial.
\end{theorem}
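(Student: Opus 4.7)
The plan is to prove, by induction on $k := \dim\lk_K(\sigma) = \mathrm{codim}\,\sigma - 1$, that the link of every simplex in $K$ is a PL-sphere of the expected dimension; this is precisely the assertion that $K$ is combinatorial. The organising observation is that for $L := \lk_K(\sigma)$ and any face $\tau$ of $L$, one has $\lk_L(\tau) = \lk_K(\sigma * \tau)$, and $\sigma * \tau$ has strictly smaller codimension in $K$ than $\sigma$; so the inductive hypothesis turns every proper link inside $L$ into a PL-sphere, which automatically makes $L$ a combinatorial triangulation of a closed PL-manifold. A second standing input is the classical fact that in any triangulation of a closed $d$-manifold the link of a codimension-$(k+1)$ simplex is a $\ZZ$-homology $k$-sphere, via the local-homology calculation at an interior point.

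The base cases $k \in \{0,1,2\}$ do not use the vertex bound in the hypothesis. For $k=0$, the closed-manifold condition forces every $(d-1)$-simplex to bound exactly two top simplices, so $L = S^0$. For $k=1$ and $k=2$, the structural observation above combined with the classification of closed $1$- and $2$-manifolds forces $L$ to be topologically $S^k$ (using that a connected closed $1$-manifold with $H_1 = \ZZ$ is a circle, and a closed $2$-manifold with the homology of $S^2$ is the $2$-sphere), and in these low dimensions the PL-structure on $S^k$ is unique.

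For the inductive step $k \geq 3$, $L$ is at this point a combinatorial triangulation of a closed $k$-dimensional PL-manifold which is a $\ZZ$-homology $k$-sphere, and the hypothesis of the theorem supplies it with at most $3k$ vertices. Corollary \ref{cor:homology sphere} applied with $d = k$ (for which the required $d \geq 3$ holds, and whose bound $3d$ matches exactly the bound in the present hypothesis) then identifies $L$ as a PL-$k$-sphere. This final appeal is the heart of the proof and its main obstacle: the whole induction is engineered so that the vertex bound $3k$ on links inside $K$ transfers verbatim to a bound on $L$ viewed as an autonomous triangulation, and the matching of the constants $3k$ and $3d$ is exactly what allows the corollary to close the argument; without it one could not rule out $L$ being a small non-PL triangulated homology sphere of the kind underlying Cannon--Edwards-type phenomena.
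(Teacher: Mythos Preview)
Your proof is correct and follows essentially the same route as the paper: induction on the link dimension, using the identity $\lk_L(\tau)=\lk_K(\sigma*\tau)$ to feed the inductive hypothesis, the local-homology computation to identify each link as a homology sphere, and Corollary~\ref{cor:homology sphere} to close the step for $k\ge 3$. Your write-up is in fact slightly more careful than the paper's (handling $k=0$ explicitly, noting PL-uniqueness of low-dimensional spheres, and stating the link-in-link identity for all faces rather than just vertices), but the structure and key ingredients match exactly.
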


%
%

\section{Preliminaries}

In this section we give recollect concepts and results that are needed in the proofs of above theorems. 

\subsection{Simplicial complexes and PL-triangulations} Here we describe two special constructions and refer the reader 
to the article of J. Bryant \cite{Bryant} for the definitions of triangulations, skeleta,  open and closed stars, links, joins, 
combinatorial triangulations and other standard concepts of PL-topology. 

Given a triangulation $M\approx |K|$ we identify the set of vertices of the triangulation with the $0$-skeleton $K^0$ of the simplicial
complex $K$. For a subset $V\subseteq K^0$, let $K(V)$ denote the full subcomplex of $K$ spanned by $V$, i.e. the maximal 
subcomplex of $K$ whose 0-skeleton is $V$. 
It is easy to check that for every vertex $v\in K^0$ the subcomplex $K(V\cup\{v\})$ can be obtained as the union of $K(V)$ and the 
join of $v$ with the part of the link of $v$ contained in $K(V)$, which can be expressed by the following formula:
\begin{equation}
K(V\cup\{v\})=K(V)\cup v*(\lk(v)\cap K(V))
\end{equation}

Furthermore, let us denote by $N(V)\subseteq |K|$ the union of open stars (with respect to $K$) of vertices in $V$. Clearly, the 
geometric realization $|K(V)|$ is a subspace of $N(V)$.

\begin{lemma}
\label{lem:KN}
$N(V)=|K|-|K(K^0-V)|$, therefore $N(V)$ is an open neighbourhood of $|K(V)|$ in $|K|$.
Moreover, $|K(V)|$ is a deformation retract of $N(V)$.
\end{lemma}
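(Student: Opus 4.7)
The plan is to prove the three assertions in order: the set equality, the topological consequences, and the deformation retraction.

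First I would verify the identity $N(V)=|K|-|K(K^0-V)|$ by the standard carrier argument. Every point $x\in|K|$ lies in the relative interior of a unique simplex $\sigma(x)$ of $K$. By definition, $x$ belongs to the open star of a vertex $v$ iff $v$ is a vertex of $\sigma(x)$. Hence $x\in N(V)$ iff at least one vertex of $\sigma(x)$ lies in $V$, while $x\in|K(K^0-V)|$ iff every vertex of $\sigma(x)$ lies in $K^0-V$. These two conditions are exact complements, which gives the claimed equality. Since $K(K^0-V)$ is a subcomplex, $|K(K^0-V)|$ is closed in $|K|$, so $N(V)$ is open. For the containment $|K(V)|\subseteq N(V)$, note that a point in $|K(V)|$ lies in the interior of a simplex all of whose vertices are in $V$, hence in each of their open stars.

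For the deformation retraction, I would define a retraction $r:N(V)\to|K(V)|$ simplex-by-simplex using barycentric coordinates. For a simplex $\sigma\in K$ write its vertex set as $\{v_1,\dots,v_k\}\sqcup\{w_1,\dots,w_m\}$ with $v_i\in V$ and $w_j\in K^0-V$. A point $x\in\sigma\cap N(V)$ has a representation
\[
x=\sum_{i=1}^{k}t_iv_i+\sum_{j=1}^{m}s_jw_j,\qquad \sum t_i+\sum s_j=1,\ \ t_i,s_j\ge 0,
\]
with $\sum t_i>0$ (since $x\in N(V)$ forces at least one vertex in $V$ in its carrier). Define
\[
r(x):=\frac{1}{\sum_{i}t_i}\sum_{i=1}^{k}t_iv_i\in|K(V)|,
\]
and the straight-line homotopy $H(x,\tau):=(1-\tau)x+\tau r(x)$.

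Next I would check that $r$ and $H$ are well-defined, continuous, and have the correct properties. Well-definedness on overlaps: if $\sigma$ is a face of $\tau$, then the formula on $\tau$ restricts on $\sigma$ to the same formula, since faces correspond exactly to setting some barycentric coordinates to zero. Continuity on each closed simplex then upgrades to continuity on $N(V)$ by the weak topology on $|K|$. The map $r$ fixes $|K(V)|$ (there all $s_j=0$ and $\sum t_i=1$), so it is a retraction. Finally, $H(\cdot,\tau)$ lands in $N(V)$ for all $\tau\in[0,1]$, because the $v_1$-coordinate of $H(x,\tau)$ equals $(1-\tau)t_1+\tau t_1/\sum t_i$, which is positive whenever $t_1>0$; so the carrier of $H(x,\tau)$ keeps a vertex in $V$.

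The only real subtlety I anticipate is the compatibility of the simplex-wise definition on intersections of simplices, i.e.\ confirming that $r$ is intrinsic (depending only on the barycentric coordinates with respect to the unique carrier) and therefore continuous on all of $N(V)$; once this is nailed down, the rest is routine and the lemma follows.
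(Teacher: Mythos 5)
Your proof is correct and follows essentially the same route as the paper: the set equality via carriers, and the retraction given by projecting barycentric coordinates onto the $V$-vertices followed by a straight-line homotopy. Your version is in fact slightly more careful, since you normalize $r(x)$ by $\sum_i t_i$ (which the paper's displayed formula omits) and you explicitly verify that the homotopy stays inside $N(V)$.
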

\begin{proof}
The first statement is obvious. In order to obtain a deformation retraction recall that every point $x\in|K|$ can be written 
uniquely in terms of barycentric coordinates 
$$x=\sum_{v\in K^0} \lambda_v(x)\cdot v.$$ 
By definition, for every $x\in N(V)$ there is at least one
$v\in V$ for which $\lambda_v(x)>0$, so we may define the retraction 
$$r\colon N(V)\to |K(V)| \ \ \text{as} \ \ r(x):=\sum_{v\in V} \lambda_v(x)\cdot v.$$
Clearly, $r$ is homotopic to the identity of $N(V)$ through a straight-line homotopy.
\end{proof}

In particular, if $K^0$ is partitioned into two disjoint subsets $V,V'$ then $N(V)$ and $N(V')$ form an open
cover of $|K|$ and 
$$N(V)\cap N(V')=N(V)-|K(V)|=N(V')-|K(V')|.$$ 

\begin{lemma}
\label{lem:complement homology}
Let $K$ be a combinatorial triangulation of a closed $d$-dimensional manifold. If $V\subseteq K^0$ spans a $d$-dimensional simplex in $K$ 
then $N(V)-|K(V)|$ is homotopy equivalent to a $(d-1)$-dimensional sphere and for $i<d$
$$H_i(K(K^0-V))\cong H_i(K)\ \ \text{and}\ \ H^i(K(K^0-V))\cong H^i(K)$$ 
(integer (co)homology unless $|K|$ is non-orientable and $i=d-1$, in which case one should use $\ZZ_2$-coefficients). 
\end{lemma}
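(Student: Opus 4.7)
I would combine PL topology with a Mayer--Vietoris argument on the open cover $\{N(V), N(V')\}$ of $|K|$, where $V' := K^0 - V$. By Lemma~\ref{lem:KN} this is indeed an open cover with $N(V) \cap N(V') = N(V) - |K(V)|$; moreover $N(V)$ deformation retracts to the $d$-simplex $\sigma := |K(V)|$ (hence is contractible), and $N(V')$ deformation retracts to $|K(V')|$. Thus everything reduces to understanding the intersection $N(V) - |K(V)|$.

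The main obstacle is identifying the homotopy type of $N(V) - |K(V)|$ as $S^{d-1}$. Since $K$ is combinatorial and $\sigma$ is a top-dimensional simplex, $\sigma$ is a PL $d$-ball sitting in the PL $d$-manifold $|K|$. By PL regular neighborhood theory, $\sigma$ admits a closed regular neighborhood $U \subseteq N(V)$ which is itself a PL $d$-ball with boundary $\partial U$ a PL $(d-1)$-sphere; the outer region $N(V) \setminus U$ is a half-open collar of $\partial U$, so $N(V) - \sigma$ deformation retracts onto $\partial U \approx S^{d-1}$. For the (co)homology claim alone one can bypass the PL ball fact: by excision $H_*(N(V), N(V)-\sigma) \cong H_*(|K|, |K|-\sigma)$ is the local (co)homology at an interior point of $\sigma$, concentrated in top degree with coefficients $\ZZ$ (orientable case) or $\ZZ_2$ (non-orientable case), and the long exact sequence of the pair $(N(V), N(V)-\sigma)$ together with contractibility of $N(V)$ then gives $N(V) - \sigma$ the (co)homology of $S^{d-1}$ in the appropriate coefficients.

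With this input, the reduced Mayer--Vietoris sequence takes the form
$$\cdots \to \widetilde H_i(S^{d-1}) \to \widetilde H_i(|K(V')|) \to \widetilde H_i(|K|) \to \widetilde H_{i-1}(S^{d-1}) \to \cdots$$
For $i < d-1$ both sphere terms vanish, yielding $H_i(|K(V')|) \cong H_i(|K|)$ at once. For $i = d-1$ one checks that the connecting map $\partial\colon H_d(|K|) \to H_{d-1}(S^{d-1})$ sends the fundamental class of $|K|$ (integral in the orientable case, $\ZZ_2$ in the non-orientable case) to the orientation generator of the boundary sphere $\partial\sigma$ of the $d$-ball $\sigma$ --- this is the compatibility between the Mayer--Vietoris connecting and the local orientation at $\sigma$ --- hence $\partial$ is surjective and the four-term exact segment forces $H_{d-1}(|K(V')|) \cong H_{d-1}(|K|)$ in the appropriate coefficients. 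The cohomology isomorphisms follow identically from the dual Mayer--Vietoris sequence, with the same coefficient caveat in degree $d-1$.
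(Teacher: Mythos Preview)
Your proposal is correct and follows essentially the same route as the paper: split $|K|$ into $N(V)$ and $N(V')$, identify the intersection with a $(d-1)$-sphere, and run Mayer--Vietoris, handling the degree $d-1$ case via surjectivity of the connecting map onto $H_{d-1}(S^{d-1})$. The only difference is that the paper dispatches the first claim in one line (``excision of the interior of the simplex $K(V)$''), whereas you spell it out via PL regular neighborhoods and offer a parallel local-homology computation; this extra care is welcome but does not change the underlying strategy.
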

\begin{proof}
The first claim follows easily by excision of the interior of the simplex $K(V)$.  
To prove the second statement for homology groups let $V'=K^0-V$ and consider the following portion of the Mayer-Vietoris sequence 
$$H_i(N(V)\cap N(V'))\to H_i(N(V))\oplus H_i(N(V'))\to H_i(K)\to H_{i-1}(N(V)\cap N(V'))$$
Observe that $H_i(N(V))=0$, that $H_i(N(V)\cap N(V'))=H_i(S^{d-1})=0$ for $i<d-1$, and that 
$H_d(|K|)\to H_{d-1}(N(V)\cap N(V'))$ is surjective (with $\ZZ_2$-coefficients if $|K|$ is non-orientable). \
By exactness of the above sequence 
$$H_i(K)\cong H_i(N(V'))\cong H_i(K(V'))$$ for $i<d$. The proof for cohomology groups is similar.
\end{proof}

\subsection{Lusternik-Schnirelmann category}

A subset $A\subseteq X$ of a topological space $X$ is said to be \emph{categorical} if the inclusion map $A\hookrightarrow X$ is nul-homotopic (i.e., if there exists a homotopy
between the inclusion and the constant map). The minimal cardinality of an open categorical cover of $X$ is denoted $\cat(X)$ and is called the \emph{Lusternik-Schnirelmann category} of $X$. 
For example, the category of a space is 1 if, and only if, it is contractible, and the category of a (non-contractible) suspension is 2, because every suspension
has a natural cover by two contractible cones. See \cite{CLOT} for a comprehensive survey of the results and the vast literature about Lusternik-Schnirelmann category and related topics. 
(Keep in mind when comparing the results that the survey \cite{CLOT} and the article \cite{DKR} use the normalized value of $\cat(X)$ which is one less than in our 
definition so that contractible spaces have category 0 and non-contractible suspensions have category 1). Lusternik-Schnirelmann category is tightly related to other homotopy invariants,
for example, a well-known result states that if $\cat(X)\le 2$ then the fundamental group of $X$ is free (see \cite[p.44]{CLOT}). 

We will base our results on a similar but much deeper theorem proved by Dranishnikov, Katz and Rudyak \cite[Corollary 1.2]{DKR}: if $M$ is a closed $d$-dimensional 
manifold ($d\ge 3$) and if $\cat(M)\le 3$ then the fundamental group of $M$ is free. 
Their proof is based on the notion of category weight which we briefly recall. Roughly speaking, a non-zero class $u\in \widetilde H^*(M)$ (here we omit the coefficients for 
cohomology from the notation) has \emph{category weight} at least $k$ if the restriction of $u$ to any union of $k$ categorical subsets of $M$ is trivial. 
Precise definition is slightly more technical - see \cite[Section 2.7]{CLOT} or \cite[Section 3]{DKR}. Clearly, if we can find classes $u,v\in \widetilde H^*(M)$ of 
weight $k$ and $l$ respectively, and such that $0\neq u\cdot v\in\widetilde H^*(M)$, then $\cat(M)>k+l$. We can summarize the main result of \cite[Section 4]{DKR} as follows:

\begin{theorem}
\label{thm:DKR}
Let $M$ be a closed $d$-dimensional ($d\ge 3$) manifold $M$ whose fundamental group is not free. Then there exist suitable systems of coefficients on $M$ and cohomology classes 
$u\in H^2(M)$ of weight 2 and $v\in H^{d-2}(M)$ of weight 1, such that $0\neq u\cdot v\in H^d(M)$. As a consequence, $\cat(M)\ge 4$.
\end{theorem}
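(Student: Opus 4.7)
The plan is to construct both cohomology classes by pulling back from the classifying space $B\pi$ (where $\pi := \pi_1(M)$) and detecting the nontriviality of their cup product via Poincar\'e duality with twisted coefficients.

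First, since $\pi$ is not free, the Stallings--Swan theorem gives $\mathrm{cd}(\pi)\geq 2$, so there is a $\ZZ\pi$-module $A$ with $H^2(B\pi; A)\neq 0$. Choose a nonzero $\alpha \in H^2(B\pi; A)$, let $f\colon M\to B\pi$ be the classifying map, and set $u:=f^*(\alpha)\in H^2(M; f^*A)$. By Poincar\'e duality with local coefficients, the pairing
\[
H^2(M; f^*A)\otimes H^{d-2}(M; \mathrm{Hom}(f^*A,\omega)) \longrightarrow H^d(M;\omega)\cong\ZZ,
\]
where $\omega$ is the orientation module, is perfect. A sufficiently clever choice of $A$ --- the technical heart of \cite{DKR} --- guarantees $u\neq 0$ and furnishes a class $v\in H^{d-2}(M; \mathrm{Hom}(f^*A,\omega))$ with $u\cdot v$ a generator of $H^d(M;\omega)$.

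Next one reads off the category weights. The bound $\mathrm{wgt}(v)\geq 1$ is automatic, because any positive-degree cohomology class restricts trivially to a nullhomotopic (hence categorical) subspace. The bound $\mathrm{wgt}(u)\geq 2$ is the crux: since $u$ is pulled back from the aspherical space $B\pi$ in degree $2$, a Mayer--Vietoris argument --- comparing the restriction of $\alpha$ over a union of two open categorical subsets in $M$ to the $1$-dimensional information at their intersection, and using that $B\pi$ is determined on its 2-skeleton --- shows that $u$ vanishes on any such union. The standard cup-length inequality $\cat(M)\geq 1+\mathrm{wgt}(u)+\mathrm{wgt}(v)$ applied to the nonzero product $u\cdot v$ then yields $\cat(M)\geq 4$.

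The principal obstacle is reconciling the two competing requirements on $u$: it must be non-trivial under $f^*$, which favors ``small'' or easily detectable modules $A$, yet must simultaneously have weight at least $2$, which favors classes hidden deep in the filtration by skeleta. This tension is resolved in \cite[Section 4]{DKR} by orchestrating the coefficient systems across the Poincar\'e duality pairing so that a single module $A$ witnesses both properties at once.
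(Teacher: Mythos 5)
The first thing to note is that the paper does not prove this statement: Theorem \ref{thm:DKR} is explicitly presented as a summary of the main result of \cite[Section 4]{DKR} and is used as a black box, so there is no in-paper argument to compare yours against. Judged on its own, your proposal is a reasonable outline of the Dranishnikov--Katz--Rudyak strategy, but it is not a proof. The two steps you yourself flag as ``the technical heart'' --- that some coefficient module $A$ makes $u=f^*(\alpha)$ nonzero on $M$, and that the duality pairing can be arranged so that $u\cdot v$ survives with the weight estimates intact --- are precisely the content of the theorem, and you discharge both by citing \cite{DKR}. A blind proof that defers its load-bearing steps to the reference being reproved has a genuine gap.

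For the record, the mechanism in \cite{DKR} is more specific than ``a sufficiently clever choice of $A$''. The class $u$ is taken to be the square of the Berstein--\v{S}varc class $b_M\in H^1(M;I(\pi))$, where $I(\pi)$ is the augmentation ideal, so $u\in H^2(M;I(\pi)\otimes I(\pi))$; the point where Stallings--Swan enters is the theorem that $b^2\neq 0$ exactly when $\pi$ is not free (nonvanishing on $B\pi$ transfers to $M$ because the classifying map is $2$-connected, hence injective on $H^2$). The bound $\mathrm{wgt}(u)\ge 2$ then comes from superadditivity of category weight under cup products, each factor $b_M$ having weight $\ge 1$ --- not from the Mayer--Vietoris argument you gesture at. (Your underlying claim, that a degree-$2$ class pulled back from an aspherical space has weight $\ge 2$, is indeed a known theorem of Rudyak and Strom, but you would have to prove or cite it, and it is not how the weight is obtained here.) Once $u\neq 0$ is known, Poincar\'e duality with local coefficients produces $v$ with $u\cdot v\neq 0$, the bound $\mathrm{wgt}(v)\ge 1$ is automatic as you say, and the inequality $\cat(M)>\mathrm{wgt}(u)+\mathrm{wgt}(v)=3$ is exactly the cup-length-with-weights inequality the paper records just before the theorem. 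So your outline points in the right direction, but each essential step is either deferred to \cite{DKR} or attributed to the wrong lemma.
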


\subsection{Homotopy triangulations and covering type}

Let us denote by $\Delta(X)$ the minimal number of vertices in a triangulation of a compact polyhedron. Clearly, $\Delta(X)$ is a topological invariant of compact
polyhedra but it is in general very far from being a homotopy invariant. As an easy example let $X_1:=S^1\vee S^1\vee S^1$, the one-point union of three circles,
let $X_2$ be the graph with two vertices and four parallel edges between them and let $X_3:=\Delta_3^{(1)}$, the 1-skeleton of the tetrahedron. All three spaces have the same
homotopy type and yet easy geometric reasoning shows that $\Delta(X_1)=7$, $\Delta(X_2)=5$, $\Delta(X_3)=4$. To obtain a homotopy invariant notion recall that 
a \emph{homotopy triangulation} of $X$ is a simplicial complex $K$ together with a homotopy equivalence $X\simeq |K|$. Then the minimal number of vertices among
all possible homotopy triangulations of $X$ is not only a homotopy invariant of $X$ but it also provides a link to the concept of covering type that was 
recently introduced by M.~Karoubi and C.~Weibel \cite{KW}. 

Recall that a cover $\UU$
of a space $X$ is said to be \emph{good} if all finite non-empty intersections of elements of $\UU$ are contractible. Standard examples are covers by convex sets,
covers of polyhedra by open stars of vertices and covers of Riemannian manifolds by geodesic balls. One of the main facts about good covers is the Nerve Theorem 
(see \cite[Corollary 4.G3]{Hatcher}): if $\UU$ is a good open cover of a paracompact space $X$, then $X\simeq |N(\UU)|$, where $|N(\UU)|$ is the geometric realization
of the nerve of $\UU$. Karoubi and Weibel defined the \emph{covering type} of $X$ as the minimum cardinality of a good open cover of a space that is homotopy equivalent to $X$. 

If $X$ admits a homotopy triangulation $X\simeq |K|$, where the simplicial complex $K$ has $n$ vertices, then the open stars of the vertices form a good cover for $|K|$,
therefore $\ct(X)\le n$. Conversely, if there exists a homotopy equivalence $X\simeq Y$ where $Y$ has a good open cover $\UU$ with 
$n$ elements, then $X\simeq Y\simeq |N(\UU)|$ is a homotopy triangulation of $X$ with $n$ vertices. 
Thus we have proved the following result (cf. \cite[Theorem 1.2]{GMP}):
\begin{proposition}
If $X$ has the homotopy type of a compact polyhedron, then $\ct(X)$ equals the minimal number of vertices in a homotopy triangulation of $X$.
\end{proposition}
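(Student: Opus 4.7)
The plan is to prove the two inequalities separately, each of which essentially boils down to invoking a standard fact from the paragraph preceding the statement.

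First I would establish $\ct(X)\le n$ whenever $X$ admits a homotopy triangulation with $n$ vertices. Let $X\simeq |K|$ with $|K^0|=n$. Then the family $\{\St(v)\colon v\in K^0\}$ of open stars of vertices is an open cover of $|K|$ of cardinality $n$. A finite non-empty intersection of open stars $\St(v_1)\cap\cdots\cap\St(v_k)$ is non-empty precisely when $v_1,\ldots,v_k$ span a simplex $\sigma$ in $K$, in which case the intersection is the open star of $\sigma$, which is star-convex with respect to the barycenter of $\sigma$ and therefore contractible. Hence the open star cover is good, which proves $\ct(|K|)\le n$. Since $\ct$ is by definition a homotopy invariant (it is the infimum taken over all spaces homotopy equivalent to $X$), we obtain $\ct(X)\le n$.

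Next I would prove the converse inequality: if $\ct(X)=n$, then $X$ admits a homotopy triangulation with at most $n$ vertices. By definition of covering type there is a space $Y\simeq X$ and a good open cover $\UU=\{U_1,\ldots,U_n\}$ of $Y$. Since $X$ has the homotopy type of a compact polyhedron we may, if needed, replace $Y$ by such a polyhedron, ensuring paracompactness. The Nerve Theorem (as quoted from \cite[Corollary 4.G3]{Hatcher}) then yields a homotopy equivalence $Y\simeq |N(\UU)|$. The nerve $N(\UU)$ is a simplicial complex whose $0$-skeleton is in bijection with $\UU$, so it has exactly $n$ vertices. Composing homotopy equivalences gives $X\simeq |N(\UU)|$, a homotopy triangulation of $X$ with $n$ vertices.

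Combining the two directions yields the equality. There is no serious obstacle here: the argument is essentially a restatement of the paragraph preceding the proposition, with the only points requiring care being (i) the verification that open stars of simplicial vertices form a good cover, which follows from star-convexity, and (ii) the paracompactness hypothesis needed to apply the Nerve Theorem, which is automatic since we may take a representative of the homotopy type that is a compact polyhedron.
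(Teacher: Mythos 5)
Your proposal is correct and follows essentially the same route as the paper, which proves the two inequalities exactly as you do: the open stars of vertices form a good cover giving $\ct(X)\le n$, and the Nerve Theorem converts a good cover into a homotopy triangulation for the reverse inequality. The only difference is that you spell out the verification that the star cover is good and the paracompactness hypothesis, details the paper leaves implicit.
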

For every compact polyhedron $X$ there is the obvious relation $\Delta(X)\ge\ct(X)$ and we have seen previously that $\Delta(X)$ can be in fact much bigger that $\ct(X)$. 
However, if $M$ is a closed triangulable manifold then there is some evidence that $\Delta(M)$ are $\ct(M)$ close and often equal. Notably, Borghini and Minian \cite{BM}
showed that for closed surfaces $\Delta(M)$ and $\ct(M)$ coincide, with the sole exception of the orientable surface of genus 2, where the two quantities differ by one. 

There are several useful estimates of $\ct(X)$ based on other homotopy invariants of $X$. For example, let $\hdim(X)$  denote the homotopy dimension of $X$, i.e. the minimal dimension
of a homotopy triangulation of $X$. Then we have the following estimate (cf. \cite[Proposition 3.1]{KW}):
\begin{proposition}
\label{prop:top homology}
Let $k=\hdim(X)$. If $\ct(X)=k+2$, then $X\simeq S^k$, otherwise $\ct(X)\ge k+3$.
\end{proposition}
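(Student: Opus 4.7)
The plan is to invoke the Nerve Theorem and then argue combinatorially on the nerve. Let $n := \ct(X)$ and choose a good open cover $\UU$ of a space homotopy equivalent to $X$ with $|\UU| = n$, so that $X \simeq |N(\UU)|$. Since $N(\UU)$ is a simplicial complex on $n$ vertices, it is a subcomplex of $\Delta^{n-1}$, and $\dim |N(\UU)| \le n-1$. Combining this with $\dim|N(\UU)| \ge \hdim(|N(\UU)|) = k$ immediately yields $n \ge k+1$.

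To sharpen this to $n \ge k+2$, observe that if $n = k+1$ then $|N(\UU)|$ would be a $k$-dimensional subcomplex of $\Delta^k$; since $\Delta^k$ has a unique top face, this forces $|N(\UU)| = \Delta^k$, which is contractible with $\hdim = 0$, contradicting $\hdim(X) = k \ge 1$. Hence $\ct(X) \ge k+2$.

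Now assume $\ct(X) = k+2$, so that $|N(\UU)|$ is a subcomplex of $\Delta^{k+1}$. The same reasoning excludes $\dim |N(\UU)| = k+1$, so $\dim |N(\UU)| = k$ and $|N(\UU)|$ lies in the $k$-skeleton $\partial \Delta^{k+1} \cong S^k$. The heart of the argument is to show $|N(\UU)| = \partial \Delta^{k+1}$. Suppose instead that some $k$-face $\sigma$ of $\Delta^{k+1}$ is missing from $|N(\UU)|$; let $v$ be the vertex of $\Delta^{k+1}$ opposite $\sigma$, put $A := \{\tau \in |N(\UU)| : v \notin \tau\}$ and $B := \lk_{|N(\UU)|}(v)$, noting that $B \subseteq A$. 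Then $|N(\UU)| = (v * B) \cup A$ with $(v * B) \cap A = B$. Because $\sigma \notin |N(\UU)|$, every simplex of $A$ is a proper face of $\sigma$, hence $A \subseteq \partial \sigma \cong S^{k-1}$ and $\dim A \le k-1$. Since $v * B$ is the cone $C(B)$, the pushout describing $|N(\UU)|$ is the mapping cone of the cofibration $B \hookrightarrow A$:
\begin{equation*}
|N(\UU)| \; = \; A \cup_B C(B) \; \simeq \; A/B,
\end{equation*}
a CW complex of dimension at most $k-1$. This contradicts $\hdim(X) = k$, so $|N(\UU)| = \partial \Delta^{k+1}$ and $X \simeq S^k$.

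The main obstacle is the final reduction: showing that any proper $k$-dimensional subcomplex of $\partial \Delta^{k+1}$ has strictly smaller homotopy dimension. The trick is to locate a missing top face $\sigma$, split the nerve along the opposite vertex $v$, and use contractibility of the cone $v * B$ to collapse $|N(\UU)|$ onto the quotient $A/B$, which lives inside the $(k-1)$-sphere $\partial \sigma$. Once this is in hand, the rest is a direct pigeonhole count comparing $\dim |N(\UU)|$ with $n-1$ and $\hdim(X)$.
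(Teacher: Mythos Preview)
Your proof is correct and follows the same line as the paper's: realize $X$ as a subcomplex of $\Delta^{n-1}$ via the Nerve Theorem, then argue that the only subcomplex with homotopy dimension $n-2$ is the boundary $\partial\Delta^{n-1}\cong S^{n-2}$. The paper merely asserts this last fact, whereas you supply an explicit justification via the mapping-cone decomposition $|N(\UU)| = A \cup_B C(B) \simeq A/B$, so your argument is in fact more complete than the one in the paper.
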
 
\begin{proof}
If $\ct(X)\le n$, then by Nerve theorem $X$ has a homotopy triangulation by a subcomplex of $\Delta_{n-1}$. However, $|\Delta_{n-1}|$ is contractible, the only 
subcomplex of $\Delta_{n-1}$ whose homotopy dimension is its $(n-2)$-skeleton $|\Delta_{n-1}^{(n-2)}|\approx S^{n-2}$, while all the other subcomplexes have the homotopy
dimension at most $n-3$. 
\end{proof}
Govc, Marzantowicz and Pave\v{s}i\'{c} \cite{GMP} applied techniques from Lusternik-Schnirelmann category to obtain further estimates of  the covering type of a space and proved 
the following results:
\begin{theorem}(\cite[Theorem 4.1]{GMP}) 
\label{thm:wedge of spheres}
The covering type of a $r$-fold wedge of sphere of dimension $i$ equals the minimal integer $n$ for which ${n-1\choose i+1}\ge r$.
\end{theorem}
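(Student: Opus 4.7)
Let $n_0 := \min\{n \in \NN : \binom{n-1}{i+1} \geq r\}$. The plan is to establish matching upper and lower bounds for $\ct(\bigvee_r S^i)$.

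For the upper bound, I would realize $\bigvee_r S^i$ as a subcomplex of the simplex $\Delta_{n_0-1}$ on $n_0$ vertices. Fix a vertex $v$ and let $\Delta'$ denote the opposite face, spanned by the remaining $n_0-1$ vertices. Form the cone $T := v*(\Delta')^{(i-1)}$, which is contractible and contains every vertex of $\Delta_{n_0-1}$. By hypothesis the $i$-skeleton of $\Delta'$ contributes $\binom{n_0-1}{i+1} \geq r$ faces not already in $T$; choose any $r$ of them, $\sigma_1,\ldots,\sigma_r$, and set $K := T \cup \sigma_1 \cup \cdots \cup \sigma_r$. Each boundary $\partial\sigma_j$ lies in $(\Delta')^{(i-1)} \subseteq T$ and is null-homotopic there since $T$ is contractible; attaching an $i$-cell along a null-homotopic map produces a wedge summand $S^i$, hence $|K|\simeq T\vee\bigvee_r S^i\simeq\bigvee_r S^i$. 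The open stars of the $n_0$ vertices of $K$ form a good cover of $|K|$, so $\ct(\bigvee_r S^i) \leq n_0$.

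For the lower bound, suppose $|L|\simeq\bigvee_r S^i$ for some simplicial complex $L$ on $n$ vertices; the goal is $r \leq \binom{n-1}{i+1}$. View $L$ as a subcomplex of $\Delta_{n-1}$, so that $L^{(i)} \subseteq \Delta_{n-1}^{(i)}$. Because both complexes have dimension at most $i$, in each case $H_i$ coincides with the cycle group $Z_i \subseteq C_i$, a subgroup of the free abelian group on the $i$-simplices; the chain-level inclusion therefore descends to an injection $H_i(L^{(i)}) \hookrightarrow H_i(\Delta_{n-1}^{(i)}) \cong \ZZ^{\binom{n-1}{i+1}}$. Meanwhile the relative chain complex $C_*(L,L^{(i)})$ vanishes in degrees $\leq i$, so the long exact sequence of the pair yields a surjection $H_i(L^{(i)}) \twoheadrightarrow H_i(L)$. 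Composing, $r = \rank H_i(L) \leq \binom{n-1}{i+1}$, hence $n \geq n_0$.

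The main subtlety lies in the upper bound: one must confirm that $K$ has the correct homotopy type rather than merely the correct Betti numbers. This rests on the standard fact that cofibers of null-homotopic maps split off wedge summands, applied uniformly to each face $\sigma_j$. The key observation is that attaching $i$-faces does not alter the $(i-1)$-skeleton of $K$, so every $\partial\sigma_j$ remains inside the contractible subcomplex $T$ throughout the construction, guaranteeing that each attaching map factors through a null-homotopic one.
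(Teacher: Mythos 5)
Your proof is correct: the upper bound via the contractible cone $T=v*(\Delta')^{(i-1)}$ with $r$ of the $\binom{n_0-1}{i+1}$ top faces of $(\Delta')^{(i)}$ attached, and the lower bound via $H_i(L)\twoheadleftarrow H_i(L^{(i)})\hookrightarrow H_i(\Delta_{n-1}^{(i)})\cong\ZZ^{\binom{n-1}{i+1}}$, are both sound. Note that the paper itself gives no proof of this statement (it is quoted from \cite[Theorem 4.1]{GMP}), and your argument is essentially the standard one used there, so there is nothing further to compare.
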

and
\begin{theorem}(\cite[Corollary 2.4]{GMP})
Let $M$ be a $d$-dimensional closed manifold. Then every triangulation of $M$ has at least
$$ 1+d+ \frac{1}{2}\cat(M)(\cat(M)-1)$$ vertices.
\end{theorem}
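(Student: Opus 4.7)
Let $K$ be a triangulation of $M$ with vertex set $V$, $|V|=n$. My plan is to bound the Lusternik--Schnirelmann category above by a purely combinatorial invariant of $K$, namely the \emph{simplicial category} $\sct(K)$: the minimum number of blocks in a partition $V=V_1\sqcup\cdots\sqcup V_k$ such that each $V_i$ is the vertex set of some simplex of $K$. The reduction $\cat(M)\le\sct(K)$ is immediate from Lemma~\ref{lem:KN}: whenever $V_i$ spans a simplex, the full subcomplex $K(V_i)$ is the corresponding closed simplex, hence contractible. The deformation retraction $N(V_i)\to|K(V_i)|$ supplied by Lemma~\ref{lem:KN} then shows that $N(V_i)$ itself is contractible, and in particular categorical in $M$. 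Since $\{N(V_i)\}_{i=1}^{k}$ is a categorical open cover of $M$, we obtain $\cat(M)\le k$, and taking the minimum yields $\cat(M)\le\sct(K)$.

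\paragraph{Combinatorial bound.}
It remains to establish the combinatorial inequality $n\ge 1+d+\binom{\sct(K)}{2}$; equivalently, $\sct(K)\le c$ whenever $n\le 1+d+\binom{c}{2}$. The intended approach is a greedy construction of the partition. Because $M$ is a closed $d$-manifold, $K$ contains a $d$-simplex; let $V_1$ be its $d+1$ vertices, accounting for the $1+d$ term. The remaining $n-d-1\le\binom{c}{2}$ vertices must then be packed into $c-1$ further simplex-spanning blocks whose sizes naturally sum to $\binom{c}{2}$; the obvious target is sizes $1,2,\ldots,c-1$. At the $j$-th stage ($j=2,\ldots,c$) the task is to exhibit a face of $K$ of dimension $j-2$ whose vertices are still unused. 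The essential combinatorial input is that the PL-manifold structure --- triangulated $(d-1)$-sphere links and pseudomanifold connectivity --- forces the existence of faces of every dimension below $d$, and guarantees that enough of them survive after removing only a bounded number of vertices.

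\paragraph{Conclusion and main obstacle.}
Combining the two steps yields $\cat(M)\le\sct(K)\le c$ whenever $n\le 1+d+\binom{c}{2}$, which rearranges to the stated bound $n\ge 1+d+\tfrac{1}{2}\cat(M)(\cat(M)-1)$. The reduction $\cat(M)\le\sct(K)$ is a routine consequence of Lemma~\ref{lem:KN}. The real work lies in the combinatorial estimate, and the principal obstacle is to prove that the greedy extraction succeeds at every stage --- i.e., that among the unused vertices there is always a face of the required dimension. The manifold hypothesis is essential here: the analogous bound is false for arbitrary simplicial complexes (three isolated points realise $\sct=3$ with $d=0$, violating $n\ge 1+0+\binom{3}{2}=4$), so the argument must genuinely exploit link connectivity and the local PL-manifold structure. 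This is the longest and most delicate part of the proof.
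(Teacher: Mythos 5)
The paper does not actually prove this statement---it is imported verbatim from \cite[Corollary 2.4]{GMP}---so your proposal has to stand on its own, and unfortunately it cannot be completed as outlined. The first step is fine: if each block $V_i$ of a partition of $K^0$ spans a simplex, then by Lemma \ref{lem:KN} each $N(V_i)$ is contractible, hence categorical, so $\cat(M)\le\sct(K)$. The fatal problem is the second step: the combinatorial inequality $n\ge 1+d+{\sct(K) \choose 2}$ that you propose to prove is \emph{false}, and not because of a fixable difficulty in the greedy extraction. Since every simplex of $K$ has at most $d+1$ vertices, every simplex-spanning block has at most $d+1$ elements, so \emph{every} triangulation satisfies $\sct(K)\ge n/(d+1)$. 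Hence the right-hand side of your inequality grows at least quadratically in $n$ while the left-hand side is $n$ itself, and the inequality fails for every sufficiently large triangulation of any fixed manifold. Concretely, a triangulation of $S^2$ with $n=100$ vertices has $\sct(K)\ge 34$, so $1+d+{\sct(K)\choose 2}\ge 3+561=564>100$. (A secondary issue: your ``equivalently'' is also off---the contrapositive of $n\ge 1+d+{\sct(K)\choose 2}$ is ``$n\le d+{c+1\choose 2}$ implies $\sct(K)\le c$,'' which is a stronger demand than the version you state.)

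The counterexample shows why no amount of work on the greedy step can save the argument: $\cat(M)$ stays bounded while $\sct(K)$ grows linearly with the number of vertices, so any proof that funnels the estimate through $\sct$ must lose the theorem. A correct proof has to use categorical sets that can absorb arbitrarily many vertices at once. The natural candidates, in the spirit of Lemma \ref{lem:KN}, are sets $N(V_i)$ where $K(V_i)$ is a \emph{disjoint union} of contractible subcomplexes (such a set is still categorical when $M$ is connected, since each component contracts separately); only with blocks of this more flexible kind does the count close, and organizing how many vertices such blocks must contain when $\cat(M)$ is large is where the real content of \cite[Corollary 2.4]{GMP} lies. Your instinct that the manifold hypothesis matters is right---your own three-isolated-points example shows the bound fails for general complexes, and the $d$-simplex block accounting for the $1+d$ term does use the manifold structure---but the mechanism is not ``the PL structure forces faces of every dimension among the unused vertices''; it is that one must abandon single-simplex blocks altogether.
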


\section{Proofs}

In this section we provide the proofs for the results states in Section 1.

\begin{proof} ({\bf of Theorem \ref{thm:simply connected}})
Observe that $M$ is by assumption simply-connected and hence orientable, which implies that Poincar\'e duality holds with arbitrary coefficients. 

Let $K$ be a combinatorial triangulation of $M$. Since $M$ is $d$-dimensional, there exists a $(d+1)$-element subset $V\subset K^0$ 
spanning a simplex.
Lemma \ref{lem:complement homology}, together with Seifert-van Kampen theorem imply that $K(K^0-V)$ is simply connected and that 
$H_i(K(K^0-V))=H_i(K)$ for $i<d$.

Under the assumption (a), if $d=2i$  and if $H_i(M)$ is the first non-trivial homology group of $M$, then the homology of $K(K^0-V)$  
is free and concentrated 
in dimension $i$. It follows that $K(K^0-V)$ is homotopy equivalent to a wedge of $i$-dimensional spheres. 
By Theorem \ref{thm:wedge of spheres} the covering type 
of a wedge of $r$ spheres of dimension $i$ is equal to $i+k+1$ where $k$ is the minimal integer satisfying ${i+k \choose i+1}\ge r$. 
We conclude that $K^0$ has at least $(d+1)+(\frac{d}{2}+k+1)=3\frac{d}{2}+k+2$ elements. 

Moreover, if $k=1$ then clearly $\rank H_i(M)=1$, therefore $M$ admits a CW-decomposition with three cells in dimensions $0,i$ 
and $d$, respectively. 
Then the $i$-dimensional skeleton is the sphere $S^i$ and the $d$-dimensional cell is attached to $S^i$ 
by a map with Hopf invariant 1. By the celebrated theorem of Adams, this is possible only if $i\in\{1,2,4,8\}$.

Under the assumption (b) $H_i(M)\ne 0$. If $H_i(M)\cong \ZZ$, then by the Universal Coefficients Theorem $H^i(M)\cong\ZZ$ and by Poincar\'e duality $H^{d-i}(M)\cong\ZZ$.
On the other hand if $H_i(M)\not\cong \ZZ$, then by Poincar\'e duality $H^{d-i}(M)\not\cong 0$ or $\ZZ$. 
Lemma \ref{lem:complement homology} yields $H^k(K(K^0-V))\cong H^k(M)$ for $k<d$, which in both cases implies that $\hdim(K(K^0-V))\ge d-i$, and that the cohomology of $K(K^0-V)$ is not that of a sphere. 
By  Proposition \ref{prop:top homology} the covering type of $K(K^0-V)$ is at least $d-i+3$. We conclude that $K^0$ has at 
least $(d+1)+(d-i+3)=2d-i+4$ elements.
\end{proof}

\begin{proof} ({\bf of Theorem \ref{thm:non free pi1}}) 
Let $K$ be a combinatorial triangulation of $M$. Since $M$ is $d$-dimensional, its triangulation must contain at least one $d$-simplex, 
and so there exist 
vertices $v_1,\ldots,v_{d+1}\in K^0$ that span a $d$-dimensional simplex in $K$.
Let us enumerate the remaining vertices so that $K^0=\{v_1,\ldots,v_{d+1},\ldots,v_n\}$. 

By adding one vertex at a time we obtain a sequence of subcomplexes 
$$\Delta_d=K_{d+1}<\ldots<K_k<K_n=K,$$
where $K_k=K(v_1,\ldots,v_k)\le K$. Since $\pi_1(M)$ is non-trivial, there exists a minimal $l$, such that $\pi_1(|K(v_1,\ldots,v_l)|)$ is non-trivial. 
By expressing $K_l$ as in formula (1) 
$$K_l=K_{l-1}\cup v_l*(\lk(v_l)\cap K_{l-1}),$$ 
we see that $K_l$ is a union of two simply-connected subcomplexes.
By Seifert-van Kampen theorem its fundamental group can be non-trivial only if (the geometric realization of) the intersection 
$L:=\lk(v_l)\cap K(v_1,\ldots,v_{l-1})$ has at least two components. Let us denote $L':=\lk(v_l)\cap K(v_{l+1},\ldots,v_n)$. Then $L$ and $L'$ are 
full subcomplexes of $\lk(v_l)$ and their vertices determine a partition of the vertices of $\lk(v_l)$. By lemma \ref{lem:KN} $|L'|$ is a deformation retract
of $|\lk(v_l)|-|L|$. Since $|\lk(v_l)|\approx S^{d-1}$, we can apply Alexander duality \cite[Theorem 3.44]{Hatcher} and obtain that
$H^{d-2}(|L'|)\cong \widetilde H_0(|L|)\ne 0.$ By Proposition \ref{prop:top homology} there exist $d-1$ vertices, which we may label as $v_{l+1},\ldots,v_{l+d-1}$, that span 
a simplex in $L'$. Since these vertices are contained in $\lk(v_l)$, they can be joined to $v_l$ in $K$, therefore vertices $v_{l},\ldots,v_{l+d-1}$ span a simplex in  $K$.  

Let us denote $A:=\{v_1,\ldots, v_{d+1}\}$ and $B:=\{v_l,\ldots,v_{l+d-1}\}$. $A$ and $B$ are disjoint and together contain $2d+1$ vertices of $K^0$. To conclude the proof,
we must show that $K^0-A-B$ contains at least $d$ vertices.
 
Since $\pi_1(M)$ is not free, Theorem \ref{thm:DKR} states that there exist cohomology classes $u\in H^2(M)$ of weight 2 and $v\in H^{d-2}(M)$ of weight 1, such that $u\cdot v\neq 0$.
Both $K(A)$ and $K(B)$ are contractible, therefore $N(A\cup B)=N(A)\cup N(B)$ is a union of two categorical sets. It follows that $u|_{N(A\cup B)}=0$, and so  
the restriction of $v$ to $N(K^0-A-B)$ cannot be trivial, as it would contradict $u\cdot v\neq 0$. Therefore $H^{d-2}(N(K^0-A-B))\neq 0$ and the Proposition \ref{prop:top homology} 
implies that $K^0-A-B$ must contain at least $d$ vertices, as claimed. 
\end{proof}

\begin{proof} ({\bf of Corollaries \ref{cor:homology sphere} and \ref{cor:Z2 homology sphere}})
Every 1- or 2-dimensional homology sphere is a PL-sphere so we may assume $d\ge 3$. If there is a PL-triangulation of $M$ with less than $3d+1$ vertices, 
then  $\pi_1(M)$ is free by by Theorem \ref{thm:non free pi1}. Therefore, assumptions $H_1(M)=0$ or $H_1(M;\ZZ_p)=0$ imply  that $M$ is simply-connected,
and so it is homeomorphic to $S^d$ by the positive answer to the Poincar\'e conjecture. 
\end{proof} 


\begin{proof} ({\bf of Theorem \ref{thm:small tgl}}) 
Let $\sigma$ be a simplex in $K$ of codimension $k+1$ and let $x\in |K|$ be a point lying in the interior of $\sigma$. Then we may use excision and homology sequence of the pair to
relate the homology of $\lk(\sigma)$ to the local homology of $|K|$ at $x$:
$$\ZZ\cong H_d(|K|,|K|-x)\cong H_d(\sigma*\lk(\sigma),\partial\sigma*\lk(\sigma))\cong $$
$$\cong H_{d-1}(\partial\sigma*\lk(\sigma))=H_{d-1}(\Sigma^{d-k-1}\lk(\sigma))\cong \widetilde H_k(\lk(\sigma)).$$
It follows that $\lk(\sigma)$ is a $k$-dimensional homology sphere. 

If codimension of $\sigma$ is at most $3$, then $\dim\lk(\sigma)\le 2$, therefore $\lk(\sigma)$ is a combinatorial triangulation of a sphere. We will use this as a base for 
induction. 

Let $\sigma$ be a simplex of codimension $k+1$, and assume that for each $v\in\lk(\sigma)$ the link $\lk(v,\lk(\sigma))=\lk(\{v\}\cup\sigma)$ is 
combinatorially equivalent to $S^{k-1}$. It follows that $\lk(\sigma)$ is a combinatorial triangulation of a $k$-dimensional homology sphere. By assumption 
$\lk(\sigma)$ has at most $3k$ vertices, so Corollary \ref{cor:homology sphere} implies that $\lk(\sigma)$ is a combinatorial triangulation of $S^k$. We conclude
that links of all simplices in $K$ are homeomorphic to spheres of suitable dimensions, hence the triangulation $K$ is combinatorial.
\end{proof}

\end{document}